\DeclareMathOperator{\sech}{sech}
\newcommand{\Bigabs}[1]{\Bigl\vert #1 \Bigr\vert}
\newcommand{\norm}[1]{\left\Vert #1 \right\Vert}
\newcommand{\N}{\mathbb{N}}
\newcommand{\R}{\mathbb{R}}
\newtheorem{theorem}{Theorem}
\newtheorem{lemma}{Lemma}
\newtheorem*{PWtheorem}{Paley-Wiener Theorem}
\theoremstyle{definition}
\theoremstyle{remark}
\title[Radius of analyticity for the KDV-BBM Equation]{Improved Lower Bound for the Radius of Analyticity of Solutions to the fifth order KdV-BBM model}
  \author [
S. Mebrate] { 
Sileshi Mebrate }
\author[T. T. Dufera] {Tamirat T. Dufera}
  \author[A. Tesfahun]{Achenef Tesfahun}
\address{Department of Mathematics \\
Nazarbayev University \\
Qabanbai Batyr Avenue 53 \\
010000 Nur-Sultan \\
Republic of Kazakhstan}
\email{achenef@gmail.com}
\address{Department of Mathematics
\\
Adama Science and Technology University
\\
Ethiopia}
   \email{tamirat.temesgen@astu.edu.et, silenaty2005@gmail.com}
\keywords{KdV-BBM model; Global well-posedness, Improved Lower bound; Radius of analyticity; Modified Gevrey spaces}
\subjclass[2010]{35A01, 35Q53}
\begin{document}
	
		\begin{abstract}
		
 We show that the uniform radius of spatial analyticity $\sigma(t)$  of solutions at time $t$ to the fifth order KdV-BBM equation  
 cannot decay faster than $1/ \sqrt{t}$ for large $t$,  given initial data that is analytic with fixed radius $\sigma_0$. 
		 This improves a recent result by Belayneh, Tegegn and the third author \cite{BTT21}, where they obtained a $1/t$ decay of $\sigma(t)$ for large time $t$.
	\end{abstract}
	
	\maketitle

	\section{Introduction}
	
In this paper we consider the Cauchy problem for fifth order KdV-BBM  equation 
	 \begin{equation}\label{kdv-bbm}
\left\{
\begin{aligned}
	 \partial_t\eta+&\partial_x\eta-\gamma_1\partial_t \partial_x^2\eta+\gamma_2 \partial_x^3\eta+\delta_1\partial_t \partial_x^4\eta+\delta_2 \partial_x^5\eta\\&= -\frac{3}{4}\partial_x(\eta^2)-\gamma\partial_x^3(\eta^2)+\frac{7}{48}\partial_x(\eta_x^2)+\frac{1}{8}\partial_x(\eta^3),
	 \\
	 	 \eta(x,0)&=\eta_0(x),
	 \end{aligned}
\right.
	 \end{equation}
	 where 
	 $\eta:\R^{1+1}\to \R$ is the unknown function, and $\gamma,\gamma_1, \gamma_2, \delta_1, \delta_2$ are constants satisfying certain constraints; see \cite{BCS2002, CP2019} for more details.
	  The fifth order KdV-BBM  equation describes the unidirectional propagation of water waves, and was recently introduced by Bona et al. \cite{BCS2002} using the second order approximation in the two way model, the so-called  \textit{abcd}-system derived in \cite{BCS2002, BCS2004}. 
	  In the case $\gamma=\frac{7}{48}$, \eqref{kdv-bbm} satisfies the energy conservation
	 \begin{equation}\label{Energy-c}
	 E(t):=\frac{1}{2}\int_{\R}\eta^2+\gamma_1\eta_x^2+\delta_1\eta_{xx}^2dx=E(0)    \qquad (t>0).
	 \end{equation}

	The well-posedness theory for the Cauchy problem \eqref{kdv-bbm} was studied by Bona et al. in \cite{BCPS2018}, where they established local well-posedness for the initial data $\eta_0\in H^s(\R)$ with $s\geq1$.  For $\gamma_1, \sigma_1 > 0$ and $\gamma=7/48$,  the authors  \cite{BCPS2018} used the conservation of energy to prove global well-posedness of \eqref{kdv-bbm} for $\eta_0\in H^s(\R)$ with $s\geq 2$. Furthermore, they used the method of \textit{high-low frequency splitting} to obtain global well-posedness for $\eta_0\in H^s(\R)$ with $3/2\leq s<2$. The global well-posedness result was further improved in \cite{CP2019} for $\eta_0\in H^s(\R)$ with $s\geq1$.

   The main concern of this paper is to study the property of spatial analyticity of the solution $\eta(x,t)$ to \eqref{kdv-bbm}, given a real analytic initial data $\eta_0(x)$ with uniform radius of analyticity $\sigma_0$,  so that there is a holomorphic extension to a
	 complex strip \[S_{\sigma_0}=\lbrace x+iy\in \mathbb{C}:|y|<\sigma_0\rbrace.\]
	 Information about the domain of analyticity of a solution
to a PDE
can be used to gain a quantitative understanding
of the structure of the equation,
and to obtain insight into underlying physical processes. 
It is classical since the work of Kato and Masuda \cite{KM1986} that, for solutions of nonlinear dispersive PDEs with analytic initial data, the radius of analyticity, $\sigma(t)$, of the solution might decrease with $t$. 
 Bourgain \cite{B1993} used a simple argument in the context of Kadomtsev Petviashvili equation to show that $\sigma(t)$ decays exponentially in $t$. 
 
 Rapid progress has been made lately in obtaining an algebraic decay rate of the radius, i.e., $\sigma(t) \sim t^{-\alpha}$ for some $\alpha \ge 1$, to various nonlinear dispersive PDEs, see eg., \cite{BTT21, HKS2017, ST2015, ST2017, SD2015, AT2017, AT2019, AT2019-S}. The method used in these papers was first introduced by Selberg and Tesfahun \cite{ST2015} in the context of the Dirac-Klein-Gordon equations, which is based on an approximate conservation laws and Bourgan's Fourier restriction method. 
For earlier studies
concerning properties of spatial analyticity of solutions for a large class of nonlinear partial differential equations, see eg.,
\cite{BGK2005, BGK2006, B1993, Ferrari1998, FT1989, GGYTE2015, HHP2011,  HP2012, KM1986, Levermore1997, Oliver2001, Panizzi2012}.

	  By the Paley–Wiener Theorem, the radius of analyticity of a function can be related to decay properties of its Fourier transform. It is therefore natural to take initial data in Gevrey space  $G^{\sigma,s}$ defined by the norm
	  \begin{equation*}
	  \norm{f}_{G^{\sigma,s}(\R)}= \norm{\exp(\sigma|\xi|)\langle \xi \rangle^s \widehat{f}}_{L^2_\xi(\R)} \quad (\sigma\geq 0),
	  \end{equation*} 
	  where $\langle \xi \rangle=\sqrt{1+\xi^2}$. 
	 For
$\sigma= 0$, this space coincides with the Sobolev space $H^{s}(\R)$, with norm
\[
\| f \|_{H^{s}(\R)} = \| \langle \xi \rangle^{s} \hat{f} \|_{L^2_\xi (\R)},
\]
while for $\sigma>0$, any function in
  $G^{\sigma, s}(\R)$ has a radius of analyticity of at least $\sigma$ at each point $x\in \R$.
This fact is contained in the following theorem, whose proof can be found in \cite{K1976} in the case $s = 0$; the general case follows from a simple modification.  
	   \begin{PWtheorem}\label{pw}
	   	Let $\sigma > 0$ and $s \in \R $, then the following are equivalent
   	\begin{enumerate}[(a)]
   		\item $f\in G^{\sigma,s}(\R)$,
   		\item  $f$ is the restriction to $\R$ of a function $F$ which is holomorphic in the strip
   		\[S_{\sigma} = \{ x + iy \in \mathbb{C}:\ |y| < \sigma \}.\]
   	\end{enumerate}
	   	 
	   	Moreover, the function $F$ satisfies the estimates
	   	\[\sup_{|y| < \sigma} \| F (\cdot + i y) \|_{H^s(\R)} < \infty.\]	   
	   \end{PWtheorem}

 Recently,  Carvajal and Panthee \cite{CP2020} used the Gevrey space to obtain an exponential decay on the radius of spatial analyticity $\sigma(t)$ for solution $\eta(x,t)$ to \eqref{kdv-bbm}, i.e., $\sigma(t)\sim e^{-t}$ for large $t$. 
	 This was improved, more recently, to a linear decay rate, $\sigma(t)\sim 1/t$, by Belayneh, Tegegn and the third author \cite{BTT21},  using the method of almost conservation law.
	   In the present paper, we improve the decay rate further to $\sigma(t)\sim 1/ \sqrt{t}$, by using
	   a modified Gevrey space that was introduced recently in \cite{DMT2022} and the method of almost conservation law.
 
	   The modified Gevrey space, denoted $H^{\sigma,s}(\R)$, is obtained from the Gevrey space $G^{\sigma,s}(\R)$ by replacing  the exponential weight $\exp(\sigma |\xi|)$ with the hyperbolic weight $\cosh(\sigma|\xi|)$, i.e., 
	  \[ \norm{f}_{H^{\sigma,s}(\R)}= \norm{\cosh(\sigma|\xi|)\langle \xi \rangle^s \widehat{f}}_{L^2_\xi(\R)} \qquad (\sigma\geq 0).\]   
	  Observe that
\begin{equation}
\label{cosh:est}
\frac 12 \exp(\sigma |\xi|)    \le \cosh (\sigma |\xi|) \le \exp(\sigma |\xi|),
\end{equation}
and hence the $G^{\sigma, s}(\R)$ and $H^{\sigma, s}(\R)$--norms are equivalent, i.e.,
\begin{equation}\label{GH-eqiv}
\| f \|_{H^{\sigma, s} (\R)} \sim \| f \|_{G^{\sigma, s} (\R)}= \norm{\exp(\sigma|\xi|)\langle \xi \rangle^s \widehat{f}}_{L^2_\xi(\R)}.
\end{equation}
Therefore, the statement of Paley-Wiener Theorem still holds for functions in $H^{\sigma, s}(\R)$.

Observe also that    
	for $\sigma\geq 0$ 
	 the exponential weight $ \exp(\sigma |\xi|) $ satisfies the estimate
	   \begin{equation} \label{expw-est}
	  \frac{1- \exp(-\sigma |\xi|)} {|\xi|} \le \sigma  
	   \end{equation}
	whereas
	 the hyperbolic weight $\cosh(\sigma |\xi|) $ satisfies
	   \begin{equation} \label{hypw-est}
	  \frac{ 1- [\cosh(\sigma|\xi|)]^{-1} } {|\xi|^2}\leq \sigma^2 .
	   \end{equation}
	Consequently, the decay rate $\sigma(t) \sim 1/t  $ that was obtained in \cite{BTT21} stems from the $\sigma$-factor on the r.h.s of \eqref{expw-est}  whereas the improved decay rate $\sigma(t) \sim  1/ \sqrt{t}$ obtained in this paper stems from the $\sigma^2$-factor on the r.h.s of \eqref{hypw-est}.

	  \vspace{3mm}
           
      We state our main result  as follows.
      \begin{theorem}[Asymptotic lower bound for $\sigma$]\label{thm-main}
      	Let $\gamma_1,  \delta_1>0$, $\gamma=\frac{7}{48}$ and $\eta_0\in H^{\sigma_0,2}(\R)$ for $\sigma_0 > 0$ . Then the global \footnote{ As a consequence of the embedding $H^{\sigma_0, 2}(\R)  \hookrightarrow H^{2}(\R)$ and the existing well-posedness theory in $H^2(\R)$ (see \cite{BCPS2018}), the Cauchy problem \eqref{kdv-bbm} (with $\gamma_1, \delta_1 > 0$ and $\gamma= 7/48$) has a unique, smooth solution for all time, given initial data 
$\eta_0 \in H^{\sigma_0, 2}$. } solution $\eta$ of \eqref{kdv-bbm} satisfies \[\eta(t)\in H^{\sigma, 2}(\R)\quad \text{for all} \quad t>0,\] with the radius of analyticity $\sigma$ satisfying the asymptotic  lower bound  \[\sigma:=\sigma(t)\geq C/\sqrt{t} \quad \text{as} \quad t\to \infty,\] where $C>0$ is constant depending on the initial data norm $\norm{\eta_0}_{H^{\sigma_0,2}(\R)}$.
      \end{theorem}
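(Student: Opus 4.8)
The plan is to follow the almost-conservation-law scheme of Selberg–Tesfahun, but carried out in the modified Gevrey space $H^{\sigma,s}$ with $s=2$, exploiting the $\sigma^2$-factor in \eqref{hypw-est} in place of the $\sigma$-factor in \eqref{expw-est}. First I would set up the local theory: for fixed $\sigma>0$ and data $\eta_0\in H^{\sigma_0,2}$, show that \eqref{kdv-bbm} is locally well-posed in $H^{\sigma,2}$ on a time interval whose length depends only on $\norm{\eta_0}_{H^{\sigma,2}}$ (for $\sigma\le\sigma_0$). This should come from the contraction argument already used in $H^2$ in \cite{BCPS2018}, noting that the linear part of \eqref{kdv-bbm}, after solving for $\partial_t\eta$, is a bounded Fourier multiplier of order $\le 1$ (the symbol $\tfrac{i\xi(1-\gamma_2\xi^2+\delta_2\xi^4)}{1+\gamma_1\xi^2+\delta_1\xi^4}$ together with the $\partial_t\partial_x^2,\partial_t\partial_x^4$ terms moved to the left), so it commutes with the weight $\cosh(\sigma|\xi|)\langle\xi\rangle^2$ and the multiplier estimates transfer verbatim. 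The nonlinearity requires an algebra-type estimate $\norm{fg}_{H^{\sigma,2}}\lesssim \norm{f}_{H^{\sigma,2}}\norm{g}_{H^{\sigma,2}}$ and its cubic analogue; since $\cosh(\sigma|\xi_1+\xi_2|)\le \cosh(\sigma|\xi_1|)\cosh(\sigma|\xi_2|)+\sinh(\sigma|\xi_1|)\sinh(\sigma|\xi_2|)\le 2\cosh(\sigma|\xi_1|)\cosh(\sigma|\xi_2|)$, and $H^2(\R)$ is an algebra, this is routine.

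The heart of the matter is the \emph{almost conservation law}. Define the modified energy
\[
E_\sigma(t):=\frac12\int_\R (\Lambda_\sigma\eta)^2+\gamma_1(\partial_x\Lambda_\sigma\eta)^2+\delta_1(\partial_x^2\Lambda_\sigma\eta)^2\,dx,
\]
where $\Lambda_\sigma$ is the Fourier multiplier with symbol $\cosh(\sigma|\xi|)$, so that $E_0(t)=E(t)$ is the conserved energy \eqref{Energy-c} and $E_\sigma(t)\sim\norm{\eta(t)}_{H^{\sigma,2}}^2$ (using $\gamma_1,\delta_1>0$ and \eqref{cosh:est}). Differentiating in $t$, applying $\Lambda_\sigma$ to \eqref{kdv-bbm}, pairing with the appropriate combination of $\Lambda_\sigma\eta$, and integrating by parts, the purely linear terms cancel exactly (as in the $\sigma=0$ energy identity) and one is left with
\[
\frac{d}{dt}E_\sigma(t)=\text{(commutator terms)}\;=\;\int_\R \bigl[\Lambda_\sigma,\,\text{nonlinearity}\bigr]\cdot(\text{test function})\,dx.
\]
The key gain is that when $\Lambda_\sigma$ hits a product, the symbol-level error is controlled by $\cosh(\sigma|\xi_1+\xi_2|)-\cosh(\sigma|\xi_1|)\cosh(\sigma|\xi_2|)$-type expressions, and after Taylor expansion the leading term carries a factor $\dfrac{1-[\cosh(\sigma|\xi|)]^{-1}}{|\xi|^2}\le\sigma^2$ from \eqref{hypw-est} rather than the $\sigma$ from \eqref{expw-est}. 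I would estimate the resulting multilinear integrals by Cauchy–Schwarz together with the $H^2$ algebra property and the energy conservation of the $\sigma=0$ problem, obtaining a bound of the form
\[
\sup_{t\in[0,\delta]}\bigl|E_\sigma(t)-E_\sigma(0)\bigr|\;\le\;C\,\sigma^2\,\bigl(1+\norm{\eta_0}_{H^{\sigma_0,2}}\bigr)^{N}
\]
on a local existence interval $[0,\delta]$ with $\delta$ depending only on the data norm.

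The final step is the bootstrap. From the almost conservation law, iterating over $\lfloor t/\delta\rfloor$ successive intervals gives $E_\sigma(t)\le E_\sigma(0)+C\sigma^2(t/\delta)(1+\norm{\eta_0}_{H^{\sigma_0,2}})^N$, which keeps $\norm{\eta(t)}_{H^{\sigma,2}}$ comparable to $\norm{\eta_0}_{H^{\sigma_0,2}}$ as long as $C\sigma^2 t\lesssim 1$; choosing $\sigma=\sigma(t)=c/\sqrt{t}$ for $t$ large (and $\sigma(t)=\sigma_0$ for bounded $t$) closes the argument and yields the claimed lower bound $\sigma(t)\ge C/\sqrt t$. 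I expect the main obstacle to be making the commutator estimate genuinely extract the full $\sigma^2$: the cubic term $\partial_x(\eta^3)$ and especially the $\partial_x(\eta_x^2)$ term lose one derivative, so one must check that the top-order contributions in the energy pairing (the $\delta_1\partial_x^2$-weighted term) still close at the level $s=2$ after the $\Lambda_\sigma$-commutator is taken — this is where the choice of $s=2$, the sign conditions $\gamma_1,\delta_1>0$, and the special coefficient $\gamma=7/48$ (needed for \eqref{Energy-c} to hold at all) are all used. Everything else is a transcription of the $G^{\sigma,s}$ argument in \cite{BTT21} with $\exp(\sigma|\xi|)$ replaced by $\cosh(\sigma|\xi|)$ and \eqref{expw-est} replaced by \eqref{hypw-est}.
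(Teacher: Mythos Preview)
Your proposal is correct and follows essentially the same strategy as the paper: local well-posedness in $H^{\sigma,2}$, the modified energy $E_\sigma$ built from $\cosh(\sigma|D|)\eta$, an almost-conservation law with a $\sigma^2$ error, and iteration over short time intervals. The one refinement worth noting is that the $\sigma^2$ gain does not come from \eqref{hypw-est} applied to a single frequency but from the multilinear symbol bound $\bigl|1-\cosh(\sigma|\xi|)\prod_{j}\sech(\sigma|\xi_j|)\bigr|\lesssim\sigma^2\sum_{j\ne k}|\xi_j||\xi_k|$ (Lemma~\ref{lm-coshest} in the paper), whose off-diagonal structure---no $|\xi_j|^2$ terms---is precisely what allows the commutator estimates for $N_1,N_2,N_3$ to close at the $H^2$ level and dispels your worry about the top-order contributions.
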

      So it follows from Theorem \ref{thm-main}  that the solution $\eta(t)$ at any time $t$ is analytic in the strip $S_{\sigma(t)}$ (due to \eqref{GH-eqiv} and the Paley-Wiener Theorem).
      
     \vspace{2mm}
     To prove Theorem \ref{thm-main}  first we  establish the following local well-posedness result,  which states that for short time the radius of analyticity of solution remains constant.
      \begin{theorem} {(Local well-posedness).}\label{thm-lwp}
      	Let $\sigma_0>0$ and $\eta_0\in H^{\sigma_0,2}(\R)$. Then there exist a unique solution \[\eta\in C([0,T];H^{\sigma_0,2}(\R))\] of the Cauchy problem \eqref{kdv-bbm}, where the existence time is
      	 \begin{equation}\label{T}
      	 T\sim \left(1+\norm{\eta_0}_{H^{\sigma_0,2}(\R)}  \right)^{-2}.
      	 \end{equation}
      	 Moreover, the data to solution map $\eta_0 \mapsto \eta$ is continuous from $H^{\sigma_0,2}(\R)$ to $C([0,T];H^{\sigma_0,2}(\R))$.
      	 
      \end{theorem}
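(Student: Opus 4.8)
The plan is to prove Theorem~\ref{thm-lwp} by a contraction mapping argument in the space $C([0,T];H^{\sigma_0,2}(\R))$, exploiting the parabolic-type smoothing built into the linear part of \eqref{kdv-bbm}. First I would rewrite the equation in Duhamel form. Writing the linear symbol as
\[
p(\xi) = \frac{i\xi - i\gamma_2\xi^3 + i\delta_2\xi^5}{1+\gamma_1\xi^2+\delta_1\xi^4},
\]
so that the operator $1-\gamma_1\partial_x^2+\delta_1\partial_x^4$ applied to $\partial_t\eta$ is inverted, the Cauchy problem becomes $\partial_t\eta = -p(D)\eta + F(\eta)$, where the Fourier multiplier defining the nonlinear term,
\[
F(\eta) = -\bigl(1-\gamma_1\partial_x^2+\delta_1\partial_x^4\bigr)^{-1}\partial_x\Bigl(\tfrac34\eta^2 + \gamma\,\partial_x^2(\eta^2) - \tfrac{7}{48}\eta_x^2 - \tfrac18\eta^3\Bigr),
\]
gains derivatives: since $(1+\gamma_1\xi^2+\delta_1\xi^4)^{-1}\langle\xi\rangle^3 \lesssim 1$ for $\delta_1>0$, the map $F$ carries $H^{\sigma_0,2}$ into $H^{\sigma_0,2}$ with the loss absorbed. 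The integral equation is $\eta(t) = e^{-tp(D)}\eta_0 + \int_0^t e^{-(t-t')p(D)}F(\eta(t'))\,dt'$, and one checks $\|e^{-tp(D)}\|_{H^{\sigma_0,2}\to H^{\sigma_0,2}} \le 1$ because $\re\,p(\xi)=0$, i.e. the linear propagator is unitary on each $H^{\sigma,s}$.

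The key step is the nonlinear estimate: I would show that for $u,v\in H^{\sigma_0,2}(\R)$,
\[
\|F(u)-F(v)\|_{H^{\sigma_0,2}(\R)} \lesssim \bigl(\|u\|_{H^{\sigma_0,2}} + \|v\|_{H^{\sigma_0,2}} + \|u\|_{H^{\sigma_0,2}}^2 + \|v\|_{H^{\sigma_0,2}}^2\bigr)\|u-v\|_{H^{\sigma_0,2}}.
\]
The essential point is an algebra-type property of $H^{\sigma_0,2}$ with respect to the modified Gevrey weight. Using \eqref{GH-eqiv} it suffices to prove it for $G^{\sigma_0,2}$, where the exponential weight satisfies $e^{\sigma_0|\xi|}\le e^{\sigma_0|\xi-\eta|}e^{\sigma_0|\eta|}$ by the triangle inequality, so that $\widehat{uv}(\xi) = \int \hat u(\xi-\eta)\hat v(\eta)\,d\eta$ combined with the standard fact that $H^2(\R)$ is a Banach algebra (and $H^2\hookrightarrow L^\infty$) gives $\|uv\|_{G^{\sigma_0,2}} \lesssim \|u\|_{G^{\sigma_0,2}}\|v\|_{G^{\sigma_0,2}}$; the derivative losses $\partial_x$ and $\partial_x^2(\eta^2)$ are compensated by the three-derivative gain from $(1+\gamma_1\xi^2+\delta_1\xi^4)^{-1}\partial_x$, while the term $\eta_x^2$ costs two derivatives but we still have one to spare, and the cubic term $\eta^3$ is handled by iterating the algebra property. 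Then
\[
\Bigl\|\int_0^t e^{-(t-t')p(D)}\bigl(F(u)-F(v)\bigr)\,dt'\Bigr\|_{C([0,T];H^{\sigma_0,2})} \le T\cdot C\bigl(R\bigr)\,\|u-v\|_{C([0,T];H^{\sigma_0,2})},
\]
so on the ball of radius $R = 2\|\eta_0\|_{H^{\sigma_0,2}}$ the map is a contraction provided $T\,C(1+R)^{?}<1/2$, which forces the time scaling $T\sim(1+\|\eta_0\|_{H^{\sigma_0,2}})^{-2}$ as claimed (the exponent $2$ coming from the fact that the worst nonlinear term is quadratic after accounting for the cubic term via a bootstrap on the small ball).

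Finally, uniqueness follows from the same contraction estimate applied to the difference of two solutions on a possibly shorter time interval together with a continuation/covering argument, and continuous dependence of the data-to-solution map follows by estimating $\|\eta-\tilde\eta\|_{C([0,T];H^{\sigma_0,2})}$ in terms of $\|\eta_0-\tilde\eta_0\|_{H^{\sigma_0,2}}$ directly from the integral equation, again using that the propagator is an isometry and the Lipschitz bound on $F$. I expect the main obstacle to be bookkeeping the derivative budget in the nonlinear estimate: one must verify that every term on the right-hand side of \eqref{kdv-bbm}, after division by the symbol $1+\gamma_1\xi^2+\delta_1\xi^4$ and multiplication by $i\xi$, still maps $H^{\sigma_0,2}$ boundedly to itself --- the tightest term being $\partial_x^3(\eta^2)$, which consumes exactly the three derivatives gained, leaving no room and requiring the $H^2$-algebra estimate to be applied with care (in particular using $H^1\hookrightarrow L^\infty$ for the low-regularity factors). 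All the weighted estimates reduce, via \eqref{cosh:est}--\eqref{GH-eqiv}, to their unweighted Sobolev counterparts, so no genuinely new harmonic analysis is needed beyond the subconvexity of the exponential weight under convolution.
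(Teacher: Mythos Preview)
Your proposal is correct and follows essentially the same route as the paper: rewrite \eqref{kdv-bbm} in Duhamel form with the unitary propagator $e^{-it\phi(D)}$, establish the nonlinear estimate $\|F(\eta)\|_{H^{\sigma_0,2}}\lesssim (1+\|\eta\|_{H^{\sigma_0,2}})\|\eta\|_{H^{\sigma_0,2}}^2$ (which the paper imports as Lemma~\ref{lm-Fest} from \cite{CP2020} combined with \eqref{GH-eqiv}, whereas you sketch it directly via the algebra property of $G^{\sigma_0,2}$), and close by contraction on a ball in $C([0,T];H^{\sigma_0,2})$ with $T\sim (1+\|\eta_0\|_{H^{\sigma_0,2}})^{-2}$. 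One terminological quibble: the gain you exploit is not ``parabolic-type smoothing'' but the BBM-type regularization from inverting $1-\gamma_1\partial_x^2+\delta_1\partial_x^4$; the linear flow itself is purely dispersive, as you correctly note when observing $\re\,p(\xi)=0$.
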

  
  Next, we derive an approximate energy conservation law for \[v_{\sigma}:=\cosh (\sigma|D|)\eta,\] where  $D=-i\partial_x$ and $\eta$ is a solution to \eqref{kdv-bbm}. To do this, we define 
  a modified energy associated with $v_\sigma$ by
  \begin{equation}\label{MEnergy-c}
  E_\sigma(t)=\frac{1}{2}\int_{\R}v_{\sigma}^2+\gamma_1 (\partial_xv_\sigma)^2+\delta_1(\partial_x^2v_\sigma)^2 dx.
  \end{equation} 
   Note that since  $v_0=\eta$, by \eqref{Energy-c} we have $ E_0(t)=   E_0(0) $ for all  $t$.
   
  \begin{theorem}\label{thm-almostconv}
  	{(Almost conservation law).} Let $\eta_0\in H^{\sigma,2}(\R)$. Suppose that $\eta\in C([0,T];H^{\sigma,2}(\R))$ is the local-in-time solution to the Cauchy problem \eqref{kdv-bbm} from Theorem \ref{thm-lwp}. Then
  		\begin{equation}\label{almostconv-est}
  	\sup_{0\leq t \leq T}E_{\sigma}(t)=E_{\sigma}(0)+\sigma^2T. \mathcal{O} \left( \left[1+(E_{\sigma}(0))^{\frac{1}{2}} \right](E_{\sigma}(0))^{\frac{3}{2}}\right).
  	\end{equation} 
  \end{theorem}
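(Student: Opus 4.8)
The plan is to turn \eqref{almostconv-est} into a differential inequality for $E_\sigma$ and integrate. Applying the Fourier multiplier $\cosh(\sigma|D|)$ to \eqref{kdv-bbm}, pairing with $v_\sigma$ in $L^2_x$ (note $v_\sigma$ is real, since $\cosh(\sigma|\xi|)$ is a real even symbol), and integrating by parts, the three $\partial_t$‑terms assemble into $\tfrac{d}{dt}E_\sigma(t)$ while the purely dispersive contributions $\int_\R v_\sigma(\partial_x+\gamma_2\partial_x^3+\delta_2\partial_x^5)v_\sigma\,dx$ vanish (integrals of odd‑order $x$‑derivatives), leaving
\[
\frac{d}{dt}E_\sigma(t)=\int_\R v_\sigma\,\cosh(\sigma|D|)N(\eta)\,dx,\qquad N(w):=-\tfrac34\partial_x(w^2)-\gamma\partial_x^3(w^2)+\tfrac7{48}\partial_x(w_x^2)+\tfrac18\partial_x(w^3).
\]
Repeating the computation behind \eqref{Energy-c} with $v_\sigma$ in place of $\eta$ and using $\gamma=\tfrac7{48}$ shows $\int_\R v_\sigma N(v_\sigma)\,dx=0$: the $-\tfrac34\partial_x(v_\sigma^2)$ and $\tfrac18\partial_x(v_\sigma^3)$ terms integrate to zero against $v_\sigma$, while the remaining two both yield $-\int(\partial_x v_\sigma)^3\,dx$ with coefficients $-\gamma$ and $\tfrac7{48}$, which cancel. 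Subtracting this zero gives the crucial reformulation
\[
\frac{d}{dt}E_\sigma(t)=\int_\R v_\sigma\big[\cosh(\sigma|D|)N(\eta)-N(v_\sigma)\big]\,dx ,
\]
whose right‑hand side is a commutator between $\cosh(\sigma|D|)$ and the nonlinearity and should therefore carry a factor $\sigma^2$.

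I would then pass to the frequency side, writing $\widehat\eta(\xi)=[\cosh(\sigma|\xi|)]^{-1}\widehat{v_\sigma}(\xi)$. Each term of $\cosh(\sigma|D|)N(\eta)-N(v_\sigma)$ becomes a multilinear expression in $\widehat{v_\sigma}$ over $\{\xi_1+\dots+\xi_{n+1}=0\}$ — trilinear ($n=2$) from the quadratic part of $N$, quadrilinear ($n=3$) from the cubic part — with symbol $P(\xi)\,R(\xi)$, where $P$ is a polynomial coming from the $\partial_x$'s and
\[
R(\xi)=\frac{\cosh(\sigma|\xi_{n+1}|)}{\cosh(\sigma|\xi_1|)\cdots\cosh(\sigma|\xi_n|)}-1,\qquad \xi_{n+1}=-(\xi_1+\dots+\xi_n).
\]
Symmetrizing the $\widehat{v_\sigma}$‑factors and using $\gamma=\tfrac7{48}$, the symmetrized symbol vanishes at $\sigma=0$ — this is precisely the cancellation producing \eqref{Energy-c} — and, $\cosh$ being even, it is even in $\sigma$, so no $O(\sigma)$ term appears. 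Quantitatively, from \eqref{hypw-est} together with the elementary bound $\cosh(\sigma|\xi_{n+1}|)\lesssim\prod_{j\le n}\cosh(\sigma|\xi_j|)$ (triangle inequality for $\xi_{n+1}$) one gets
\[
|R(\xi)|\ \lesssim\ \sigma^2\,{\textstyle\max_j}\,\xi_j^2\qquad(\text{and }|R(\xi)|\lesssim1\text{ always}).
\]

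It then remains to prove the multilinear estimate
\[
\Big|\int_\R v_\sigma\big[\cosh(\sigma|D|)N(\eta)-N(v_\sigma)\big]\,dx\Big|\ \lesssim\ \sigma^2\big(\fixednorm{v_\sigma}_{H^2}^3+\fixednorm{v_\sigma}_{H^2}^4\big),
\]
and this is the step I expect to be the main obstacle. The difficulty is that the two derivatives produced by the factor $\max_j\xi_j^2$ in $R$ must not overspend the available $H^2$‑regularity: a naive term‑by‑term bound on the quadratic contribution produces a symbol of size $\sigma^2\angles{\xi_{(1)}}^5$ (the $\xi_{(j)}$ ordered by magnitude), which is too large on that single factor. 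The resolution is to use the cancellation $\sum_jP_j=0$ in an essential way — never bounding $\sum_j$ term‑by‑term — so that each $P_j$ carries a factor $\xi_j$ and the difference of hyperbolic weights supplies extra smallness exactly where a frequency degenerates; combined with the fact that on $\{\sum\xi_j=0\}$ the two largest frequencies are comparable, this brings the effective symbol down to $\lesssim\sigma^2\angles{\xi_{(1)}}^2\angles{\xi_{(2)}}^2\angles{\xi_{(3)}}$, which closes in $H^2$ upon putting the lowest‑frequency factor into $\ell^1$ via $H^{1/2+}(\R)\hookrightarrow\{\,\widehat{f}\in L^1\,\}$ and the other two in $L^2$ (the quadrilinear term from $\tfrac18\partial_x(\eta^3)$ is analogous but easier, since $E_\sigma^2$ leaves ample room). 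Since $\gamma_1,\delta_1>0$ gives $E_\sigma(t)\sim\fixednorm{v_\sigma(t)}_{H^2}^2$, the displayed estimate becomes $\big|\tfrac{d}{dt}E_\sigma(t)\big|\lesssim\sigma^2\big(E_\sigma(t)^{3/2}+E_\sigma(t)^2\big)=\sigma^2\big[1+E_\sigma(t)^{1/2}\big]E_\sigma(t)^{3/2}$. Integrating on $[0,T]$ and using that Theorem \ref{thm-lwp} furnishes $\sup_{[0,T]}\fixednorm{v_\sigma(t)}_{H^2}=\sup_{[0,T]}\fixednorm{\eta(t)}_{H^{\sigma,2}}\lesssim\fixednorm{\eta_0}_{H^{\sigma,2}}$, hence $E_\sigma(s)\lesssim E_\sigma(0)$ for $s\in[0,T]$, converts the right‑hand side into $\sigma^2T\,[1+E_\sigma(0)^{1/2}]E_\sigma(0)^{3/2}$ and yields \eqref{almostconv-est}.
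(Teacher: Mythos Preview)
Your overall architecture is correct and coincides with the paper's: apply $\cosh(\sigma|D|)$ to the equation, compute $\tfrac{d}{dt}E_\sigma$, observe that the ``main'' nonlinear contribution $\int_\R v_\sigma\,N(v_\sigma)\,dx$ vanishes by the same integration-by-parts identities that give \eqref{Energy-c}, and reduce to estimating the commutator $\int_\R v_\sigma\bigl[\cosh(\sigma|D|)N(\eta)-N(v_\sigma)\bigr]\,dx$; then integrate in $t$ and invoke the $H^{\sigma,2}$ bound from Theorem~\ref{thm-lwp}. Up to this point you and the paper agree.

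The gap is in the commutator estimate. The bound you state,
\[
|R(\xi)|\ \lesssim\ \sigma^2\,\max_j\xi_j^2,
\]
is too weak: it places both extra derivatives on the highest-frequency factor, and for the $\gamma\partial_x^3(\eta^2)$ contribution this produces (as you note) a symbol of effective size $\sigma^2\angles{\xi_{(1)}}^5$ against a single input, which does not close in $H^2$. Your proposed rescue via a cancellation ``$\sum_jP_j=0$'' does not work: that algebraic identity is exactly the vanishing of $\int v_\sigma N(v_\sigma)$ and has already been consumed in passing to the commutator form. After that subtraction there is no further polynomial cancellation among the remaining terms to exploit.

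What actually fixes the difficulty is a \emph{sharper pointwise bound on $R$ itself}, namely (this is the paper's Lemma~\ref{lm-coshest})
\[
\Bigl|1-\cosh(\sigma|\xi|)\prod_{j=1}^{p}\sech(\sigma|\xi_j|)\Bigr|
\ \le\ 2^{p}\,\sigma^2\sum_{j\neq k}|\xi_j|\,|\xi_k|,
\qquad \xi=\sum_{j=1}^p\xi_j,
\]
which for $p=2$ reads $|R|\lesssim\sigma^2|\xi_1||\xi_2|$. Your own remark that ``the difference of hyperbolic weights supplies extra smallness exactly where a frequency degenerates'' is precisely this statement, but you have not isolated it as an inequality; once you do, the two extra derivatives land on \emph{different} factors and every piece of the commutator can be estimated term by term via Cauchy--Schwarz, H\"older and Sobolev embedding, with no further cancellation required. (For $p=2$ one can see it directly: with $a=\sigma|\xi_1|$, $b=\sigma|\xi_2|$, $c=\sigma|\xi_1+\xi_2|$ one has $\cosh(a-b)\le\cosh c\le\cosh(a+b)$ and $\cosh a\cosh b=\tfrac12[\cosh(a+b)+\cosh(a-b)]$, whence $|R|\le\tanh a\,\tanh b\le\sigma^2|\xi_1||\xi_2|$.) Replace your bound on $R$ by this one and your outline becomes the paper's proof.
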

  Observe that from \eqref{almostconv-est}, in the limit as $\sigma \rightarrow 0$, we recover the conservation 
  $ E_0(t)=   E_0(0) $ for $0\le t \le T$.
Applying the last two theorems repeatedly, and then by taking $\sigma$ small enough we can cover any time interval $[0, T_\ast]$ and obtain the lower bound in Theorem \ref{thm-main}.

  \vspace{2mm}
 \textbf{Notation}: For any positive numbers $p$ and $q$, the notation $p \lesssim q$ stands for
$p\leq cq$, where $c$ is a positive constant that may vary from line to line. Moreover,
we denote $p \sim q$ when $p \lesssim q$ and $q \lesssim p$.

\vspace{2mm}

In the next sections we prove Theorems \ref{thm-lwp}, \ref{thm-almostconv} and \ref{thm-main}.

\section{Proof of Theorem \ref{thm-lwp}}
 Taking the spatial Fourier transform of the first equation in \eqref{kdv-bbm} we obtain
      \begin{equation*}
      \begin{split}
      \partial_t\widehat{\eta}+i\xi\widehat{\eta}&+\gamma_1\xi^2\partial_t\widehat{\eta}-i\gamma_2\xi^3\widehat{\eta}+\delta_1\xi^4\partial_t\widehat{\eta}+i\delta_2\xi^5\widehat{\eta}\\&=-\frac{3}{4}i\xi\widehat{\eta^2}+i\gamma\xi^3\widehat{\eta^2}+\frac{7}{48}i\xi\widehat{\eta_x^2}+\frac{1}{8}i\xi\widehat{\eta^3}.
      \end{split}
      \end{equation*}
      Arranging the terms we have
      \begin{equation*}
      \begin{split}
      \left(1+\gamma_1\xi^2+ \delta_1\xi^4\right)\partial_t\widehat{\eta}&+i\xi\left(1-\gamma_2\xi^2+\delta_2\xi^4\right)\widehat{\eta}\\&=\frac{1}{4}i\xi\left(-3+4\gamma\xi^2\right)\widehat{\eta^2}+\frac{7}{48}i\xi\widehat{\eta_x^2}+\frac{1}{8}i\xi\widehat{\eta^3}.
      \end{split}
      \end{equation*} 
      Dividing this equation by $\varphi(\xi):=1+\gamma_1\xi^2+\delta_1\xi^4$ and multiplying by $i$,
      we obtain
     \begin{equation}\label{bbm-reform}
     i\partial_t \widehat{\eta}-\phi(\xi)\widehat{\eta}=\tau(\xi)\widehat{\eta^2}-\frac{7}{48}\psi(\xi)\widehat{\eta_x^2}-\frac{1}{8}\psi(\xi)\widehat{\eta^3},
     \end{equation} 
     where \[ \phi(\xi)=\frac{\xi(1-\gamma_2\xi^2+\delta_2\xi^4)}{\varphi(\xi)}, \quad \tau(\xi)=\frac{\xi(3-4\gamma\xi^2)}{4\varphi(\xi)}, \quad\psi(\xi)=\frac{\xi}{\varphi(\xi)}.\] 
      In an operator form \eqref{bbm-reform} can be rewritten as
     \begin{equation}\label{bbm-reformo}
     i\partial_t\eta-\phi(D)\eta=\underbrace{\tau(D)\eta^2-\frac{7}{48}\psi(D)\eta_x^2-\frac{1}{8}\psi(D)\eta^3}_{:= F(\eta)},
     \end{equation} 
     where
    $\phi(D)$, $\psi(D)$ and $\tau(D)$ are Fourier multiplier operators defined as \[\mathcal{F}[\phi(D)f](\xi)=\phi(\xi)\widehat{f}(\xi), \quad \mathcal{F}[\psi(D)f](\xi)=\psi(\xi)\widehat{f}(\xi), \quad \mathcal{F}[\tau(D)f](\xi)=\tau(\xi)\widehat{f}(\xi).\]
Now given initial data $\eta(0)=\eta_0$, the integral representation of \eqref{bbm-reformo} is
     \begin{equation} \label{Integeq}
     \eta(t)=e^{-it\phi(D)}\eta_0-i\int_{0}^{t}  e^{-i(t-s)\phi(D)} F(\eta)(s)ds.
     \end{equation}

By combining the estimates in \cite[Lemma 2.2--2.4]{CP2020} and \eqref{GH-eqiv}, we obtain the following a priori estimate for the $H^{\sigma,2}(\R)$-norm of $F(\eta)$.
     \begin{lemma} \label{lm-Fest}
     	For $\sigma \ge 0$, we have nonlinear estimate 
     	$$
     	\norm{F(\eta)}_{H^{\sigma,2}(\R)}\lesssim [1+\norm{\eta}_{H^{\sigma,2}(\R)}]\norm{\eta}^2_{H^{\sigma,2}(\R)} 
$$   x
for all $\eta\in H^{\sigma,2}(\R)$.

  \end{lemma}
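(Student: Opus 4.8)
The plan is to transfer the inequality into the ordinary Gevrey space $G^{\sigma,2}(\R)$ by means of the norm equivalence \eqref{GH-eqiv}, and there to estimate the three terms that make up $F(\eta)$ in \eqref{bbm-reformo} one at a time. The only structural fact needed is that $G^{\sigma,s}(\R)$ is a Banach algebra for $s>1/2$, and this comes for free from the subadditivity $|\xi|\le |\xi-\eta|+|\eta|$: letting $A_\sigma$ denote the Fourier multiplier with symbol $\exp(\sigma|\xi|)$, so that $\norm{f}_{G^{\sigma,s}(\R)}=\norm{A_\sigma f}_{H^s(\R)}$, one has the pointwise bound $\bigl|\widehat{A_\sigma(fg)}\bigr|\le \bigl(|\widehat{A_\sigma f}|\ast|\widehat{A_\sigma g}|\bigr)$, and since the standard $H^s$ product estimate for $s>1/2$ only sees the absolute values of the Fourier transforms, it follows that
\[
\norm{fg}_{G^{\sigma,s}(\R)}\lesssim \norm{f}_{G^{\sigma,s}(\R)}\,\norm{g}_{G^{\sigma,s}(\R)}\qquad (s>1/2).
\]
Equivalently, one may simply quote the three bounds below from \cite[Lemmas 2.2--2.4]{CP2020}, which are already stated in the $G^{\sigma,2}$-setting, and combine them with \eqref{GH-eqiv}.

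Next I would record the symbol bounds. Since $\gamma_1,\delta_1>0$, we have $\varphi(\xi)=1+\gamma_1\xi^2+\delta_1\xi^4\sim\langle\xi\rangle^4$, and therefore
\[
\langle\xi\rangle^2|\tau(\xi)|\lesssim \langle\xi\rangle,\qquad \langle\xi\rangle^2|\psi(\xi)|\lesssim \langle\xi\rangle^{-1},
\]
i.e., $\tau(D)$ smooths by one order and $\psi(D)$ by three. Combining these with the algebra property (applied at regularities $s=2$ and $s=1$, both above $1/2$) and with $\norm{\eta_x}_{G^{\sigma,1}(\R)}\lesssim\norm{\eta}_{G^{\sigma,2}(\R)}$, one gets
\begin{align*}
\norm{\tau(D)\eta^2}_{G^{\sigma,2}(\R)} &\lesssim \norm{\eta^2}_{G^{\sigma,1}(\R)}\lesssim \norm{\eta}_{G^{\sigma,1}(\R)}^2\le \norm{\eta}_{G^{\sigma,2}(\R)}^2,\\
\norm{\psi(D)\eta_x^2}_{G^{\sigma,2}(\R)} &\lesssim \norm{\eta_x^2}_{G^{\sigma,-1}(\R)}\le \norm{\eta_x^2}_{G^{\sigma,1}(\R)}\lesssim \norm{\eta_x}_{G^{\sigma,1}(\R)}^2\lesssim \norm{\eta}_{G^{\sigma,2}(\R)}^2,\\
\norm{\psi(D)\eta^3}_{G^{\sigma,2}(\R)} &\lesssim \norm{\eta^3}_{G^{\sigma,2}(\R)}\lesssim \norm{\eta}_{G^{\sigma,2}(\R)}^3.
\end{align*}
Adding the three contributions and passing back through \eqref{GH-eqiv} on both sides produces $\norm{F(\eta)}_{H^{\sigma,2}(\R)}\lesssim \norm{\eta}_{H^{\sigma,2}(\R)}^2+\norm{\eta}_{H^{\sigma,2}(\R)}^3\lesssim [1+\norm{\eta}_{H^{\sigma,2}(\R)}]\norm{\eta}_{H^{\sigma,2}(\R)}^2$, which is the claim.

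The hard part is essentially nonexistent; the one place to be careful is the spatial derivative inside $\eta_x^2$, which at the target regularity $s=2$ cannot be absorbed by a naive product estimate but is harmless here because $\psi(D)$ supplies a net one derivative of smoothing — equivalently, one estimates $\eta_x^2$ one order below $G^{\sigma,2}$, namely in $G^{\sigma,1}$, where $\eta_x$ still lies in an algebra since $1>1/2$. I would also check once and for all that the constant in \eqref{GH-eqiv} and every implicit constant above are independent of $\sigma$ — which they are, because \eqref{cosh:est} is uniform in $\sigma$ and the symbol estimates contain no $\sigma$ — so that the bound indeed holds for all $\sigma\ge 0$ simultaneously, degenerating to the usual $H^2$ nonlinear estimate when $\sigma=0$.
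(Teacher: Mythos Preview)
Your proposal is correct and follows precisely the route the paper indicates: the paper's entire justification of Lemma~\ref{lm-Fest} is the sentence ``By combining the estimates in \cite[Lemma 2.2--2.4]{CP2020} and \eqref{GH-eqiv},'' which is exactly your strategy of transferring to $G^{\sigma,2}$ via the norm equivalence and invoking the Carvajal--Panthee bounds. You go further by actually sketching how those three lemmas are proved (symbol bounds on $\tau,\psi$ plus the $G^{\sigma,s}$ algebra property for $s>1/2$), which is more than the paper itself offers, but the underlying approach is identical.
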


	     Next, we use the contraction mapping techniques and Lemma \ref{lm-Fest} to prove Theorem \ref{thm-lwp}. To this end, define the mapping $\eta \mapsto \Gamma(\eta)$ by
	     	\[\Gamma (\eta)(t):=e^{-it\phi(D)}\eta_0-i\int_{0}^{t}  e^{-i(t-s)\phi(D)} F(\eta)(s)ds\] 
	and the space $X_T$ by
	     	\[X_T=C([0,T]:H^{\sigma,2}(\R))  \quad \text{ with norm} \quad \norm{u}_{X_{T}}=\sup_{0\leq t \leq T}\norm{u(t)}_{H^{\sigma,2}(\R)} . \] 
	Then we look for a solution in the set  \[\mathcal S_r=\lbrace \eta\in X_T \ : \ \norm{\eta}_{X_T}\leq  r\rbrace ,\] 
	where $2r=\norm{\eta_0}_{H^{\sigma,2}(\R)}.$
	
	For $\eta \in X_T$, we have by Lemma \ref{lm-Fest},
	\begin{equation}\label{Gamma-Est}
	\begin{split}
		\norm{\Gamma(\eta) }_{X_T}  &\leq \norm{\eta_0}_{H^{\sigma, 2}(\R)}+c T\left[1+\norm{\eta}_{X_T}\right] \norm{\eta}^2_{X_T} \\& \leq r/2 +cT r (1+r)^2.
		\end{split}
	\end{equation}
	Similarly, for $\eta_1, \eta_2 \in X_T$, we obtain the difference estimate
	\begin{equation}\label{Gamma-Diff}
	\norm{\Gamma(\eta_1) - \Gamma(\eta_2) }_{X_T}  \leq cT  (1+r)^2 \norm{\eta_1-\eta_2}_{X_T}.
	\end{equation}
By choosing
$$T = \frac{1}{2c(1+r)^2}$$
	in \eqref{Gamma-Est} and \eqref{Gamma-Diff} we obtain 
	\begin{equation*}\label{Gamma}
		\norm{\Gamma(\eta) }_{X_T}  \leq  r  \quad \text{and} \quad \norm{\Gamma(\eta_1) - \Gamma(\eta_2) }_{X_T}  \leq \frac12  \norm{\eta_1-\eta_2}_{X_T}.
	\end{equation*}
Therefore,  $\Gamma$ is a contraction on $\mathcal S_r$ and therefore it has a unique fixed point
 $\eta \in \mathcal S_r$ solving the integral equation
 \eqref{Integeq} on $\R \times [0, T]$. Continuous dependence on the initial data can 
be shown in a similar way, using the difference estimate. This concludes the proof of Theorem \ref{thm-lwp}.

	  \section{Proof of Theorem \ref{thm-almostconv}}
	 
	  Fix $\gamma_1, \delta_2>0$ and $\gamma=\frac{7}{48}$. Recall that
	  $v_{\sigma}:=\cosh(\sigma|D|)\eta$, where $\eta$ is the solution to \eqref{kdv-bbm}, and hence $\eta =\sech (\sigma |D|)  v_\sigma$.
	  
Applying the operator $\cosh(\sigma|D|)$ to \eqref{kdv-bbm} we obtain
	 \begin{equation}\label{bbm-mod}
	 \begin{split}
	 \partial_tv_{\sigma}+&\partial_xv_\sigma-\gamma_1\partial_t \partial_x^2v_{\sigma}+\gamma_2 \partial_x^3v_{\sigma}+\delta_1\partial_t \partial_x^4v_{\sigma}+\delta_2 \partial_x^5v_{\sigma} \\& =-\left(\frac{3}{4}+\gamma\partial_x^2\right) \partial_x (v_{\sigma}^2)+\gamma\partial_x({\partial_xv_\sigma})^2+\frac{1}{8}\partial_x(v_{\sigma}^3)+N(v_{\sigma}),
	 \end{split}
	 \end{equation}
	  where
	  \begin{equation}\label{N}
	  N(v_{\sigma})=\left(\frac{3}{4}+\gamma\partial_x^2\right)\partial_x N_1(v_{\sigma})-\gamma \partial_x N_2(v_{\sigma})-\frac{1}{8}\partial_x N_3(v_{\sigma})
	  \end{equation}
	  with 
	  \begin{equation}\label{Nj}
	  \begin{split}
	  N_1(v_{\sigma})&=v_{\sigma}^2-\cosh(\sigma|D|) \left[ \sech(\sigma|D|)v_\sigma \right]^2,\\ N_2(v_{\sigma})&=(\partial_xv_\sigma)^2-\cosh(\sigma|D|) \left[ \sech(\sigma|D|) \partial_ xv_\sigma \right]^2,\\N_3(v_{\sigma})&=v_{\sigma}^3-\cosh(\sigma|D|) \left[ \sech(\sigma|D|) v_\sigma \right]^3.
	  \end{split}
	  \end{equation}

Differentiating the modified energy, \eqref{MEnergy-c}, and using \eqref{bbm-mod}--\eqref{Nj} we obtain
	  \begin{align*}
	  \frac{d}{dt}E_{\sigma}(t)&=\int_{\R}v_\sigma \partial_tv_\sigma+\gamma_1 \partial_xv_\sigma \partial_t(\partial_xv_\sigma)+\delta_1 \partial_{xx}v_\sigma \partial_t(\partial_{xx}v_\sigma)dx\\&=\int_{\R}v_\sigma[\partial_tv_\sigma-\gamma_1\partial_t \partial^2_xv_\sigma+\delta_1\partial_t\partial^4_xv_\sigma ]dx\\&=-\int_{\R}v_\sigma\left[\partial_xv_\sigma+ \gamma_2\partial^3_xv_\sigma+\delta_2\partial^5_xv_\sigma+\left(\frac{3}{4}+\gamma\partial_x^2\right) \partial_x (v_{\sigma}^2)-\gamma \partial_x(\partial_xv_\sigma)^2-\frac{1}{8}\partial_x(v_\sigma)^3\right]dx\\&+\int_{\R}v_\sigma N(v_\sigma)dx.
	  \end{align*}
However,  the integral on the third line is zero  due to integration by parts (assuming sufficiently regular solution) and the following identities:
	  \begin{align*}
    u \partial_x u &= \frac12 \left(u^2\right)_x, \qquad  u \partial^3_x u = \left(u u_{xx}\right)_x  -\frac12 \left(u_x^2\right)_x,
    \\
  u \partial^5_x u &= \left(u \partial^4_{x} u\right)_x - \left(\partial_x u \partial^3_{x} u\right)_x  + \frac12 \left(u_{xx}^2\right)_x
\end{align*}
and 
\begin{align*}
   u \partial_x \left(u^2 \right)&=  \frac23 \left(u^3 \right)_x, \qquad u \partial_x \left(u^3 \right)= \frac34 \left(u^4 \right)_x  ,\\
  u \partial^3_x (u^2)&=2 \left(u^2 u_{xx} \right)_x + u \left(u_x^2 \right)_x.
\end{align*}
	    Therefore, 
	 \[\frac{d}{dt}E_{\sigma}(t)=\int_{\R}v_\sigma(x,t)N(v_\sigma(x,t))dx.\] 
	 Consequently, integrating with respect to time we get
	 \begin{equation}\label{m-energy}
	 E_{\sigma}(t)=E_{\sigma}(0)+\int_{0}^{t}\int_{\R}v_\sigma(x,s)N(v_\sigma(x,s))dxds.
	 \end{equation}

	Combining \eqref{m-energy} with the following  
 key lemma, which will be be proved in the last section, we obtain \eqref{almostconv-est}.
\begin{lemma}\label{lm-keylm}
	For  $N(v_\sigma)$ as in \eqref{N}--\eqref{Nj} we have 
	\begin{equation}
	\label{KeyEst}
	\Big| \int_{\R}v_\sigma N(v_\sigma)dx\Big| \leq c\sigma^2\left[1+\norm{v_\sigma}_{H^2(\R)}\right]\norm{v_\sigma}^3_{H^2(\R)}
		\end{equation}
	for all $v_\sigma\in H^2(\R)$.
\end{lemma}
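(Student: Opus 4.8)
\medskip
\noindent\emph{Sketch of the proof of Lemma~\ref{lm-keylm}.}
The plan is to move everything onto the Fourier side, extract an explicit factor $\sigma^2$ from each of the commutator-type quantities $N_1,N_2,N_3$ by means of \eqref{hypw-est}, and then close the resulting multilinear estimates in $H^2(\R)$ by a derivative count. Using \eqref{N}--\eqref{Nj} and integration by parts, one first writes $\int_\R v_\sigma N(v_\sigma)\,\d x$ as a sum of four pieces that, after Plancherel, are dominated up to absolute constants by $\int_\R|\xi|\,|\widehat{v_\sigma}|\,|\widehat{N_1}|\,\d\xi$, $\int_\R|\xi|^3\,|\widehat{v_\sigma}|\,|\widehat{N_1}|\,\d\xi$, $\int_\R|\xi|\,|\widehat{v_\sigma}|\,|\widehat{N_2}|\,\d\xi$ and $\int_\R|\xi|\,|\widehat{v_\sigma}|\,|\widehat{N_3}|\,\d\xi$. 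A direct computation from \eqref{Nj} shows that
\begin{align*}
\widehat{N_1}(\xi)&=\int_\R m(\xi,\zeta)\,\widehat{v_\sigma}(\xi-\zeta)\,\widehat{v_\sigma}(\zeta)\,\d\zeta,\\
\widehat{N_2}(\xi)&=-\int_\R (\xi-\zeta)\,\zeta\,m(\xi,\zeta)\,\widehat{v_\sigma}(\xi-\zeta)\,\widehat{v_\sigma}(\zeta)\,\d\zeta,
\end{align*}
with the \emph{same} symbol $m(\xi,\zeta)=1-\cosh(\sigma|\xi|)\,\sech(\sigma|\xi-\zeta|)\,\sech(\sigma|\zeta|)$, while $\widehat{N_3}(\xi)$ is an analogous double convolution of three copies of $\widehat{v_\sigma}$ against the cubic symbol $m_3(\zeta_1,\zeta_2,\zeta_3)=1-\cosh(\sigma|\zeta_1+\zeta_2+\zeta_3|)\prod_{j=1}^{3}\sech(\sigma|\zeta_j|)$.

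The crux is the pointwise bound
\[
|m(\xi,\zeta)|=\tanh(\sigma|\xi-\zeta|)\,\tanh(\sigma|\zeta|)\le\sigma^2\,|\xi-\zeta|\,|\zeta|.
\]
Indeed, since $\xi=(\xi-\zeta)+\zeta$ one has $|\xi|\in\{\,|\xi-\zeta|+|\zeta|\,,\ \bigl||\xi-\zeta|-|\zeta|\bigr|\,\}$, so the addition formula for $\cosh$ gives $\bigl|\cosh(\sigma|\xi|)-\cosh(\sigma|\xi-\zeta|)\cosh(\sigma|\zeta|)\bigr|=\sinh(\sigma|\xi-\zeta|)\sinh(\sigma|\zeta|)$; dividing by $\cosh(\sigma|\xi-\zeta|)\cosh(\sigma|\zeta|)$ yields the identity, and the inequality then uses only $\tanh t\le t$ — the same mechanism that produces \eqref{hypw-est}. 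In the same spirit, $\cosh(\sigma|\xi|)\le\cosh(\sigma\sum_j|\zeta_j|)$, expansion of the product, and \eqref{hypw-est} give $|m_3(\zeta_1,\zeta_2,\zeta_3)|\lesssim\sigma^2\sum_j|\zeta_j|^2$. Hence each of the four pieces carries an explicit factor $\sigma^2$.

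Inserting these bounds, the three pieces coming from $N_1$ and $N_2$ are controlled by $\sigma^2$ times trilinear integrals $\int_{\xi_1+\xi_2+\xi_3=0}W(\xi_1,\xi_2,\xi_3)\,|\widehat{v_\sigma}(\xi_1)|\,|\widehat{v_\sigma}(\xi_2)|\,|\widehat{v_\sigma}(\xi_3)|$, where $W$ is a monomial in $|\xi_1|,|\xi_2|,|\xi_3|$ of total degree at most $5$; the extreme case $W=|\xi_1|^3|\xi_2||\xi_3|$ arises from $\int_\R|\xi|^3\,|\widehat{v_\sigma}|\,|\widehat{N_1}|\,\d\xi$. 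Since $\xi_1+\xi_2+\xi_3=0$ forces the two largest of $|\xi_1|,|\xi_2|,|\xi_3|$ to be comparable, and the integrand is invariant under permuting the inputs of $N_j$, one can always redistribute derivatives so that $W\lesssim\angles{\xi_1}^{a_1}\angles{\xi_2}^{a_2}\angles{\xi_3}^{a_3}$ with each $a_i\le 2$ and $\min_i a_i\le 1$ (for $W=|\xi_1|^3|\xi_2||\xi_3|$ with $|\xi_2|\ge|\xi_3|$ this amounts to writing $|\xi_1|^3\lesssim|\xi_1|^2|\xi_2|$). Young's inequality, together with the Sobolev embedding $H^1(\R)\hookrightarrow L^\infty(\R)$ applied to the low-exponent factor, then bounds the trilinear integral by $\norm{v_\sigma}_{H^2(\R)}^3$. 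The $N_3$-piece is a quadrilinear integral whose weight has total degree only $3$, and the same reasoning bounds it by $\norm{v_\sigma}_{H^2(\R)}^4$. Summing the contributions gives $\bigl|\int_\R v_\sigma N(v_\sigma)\,\d x\bigr|\lesssim\sigma^2\bigl[1+\norm{v_\sigma}_{H^2(\R)}\bigr]\norm{v_\sigma}_{H^2(\R)}^3$, which is \eqref{KeyEst}.

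The step I expect to be the main obstacle is the term $\gamma\int_\R v_\sigma\,\partial_x^3 N_1\,\d x$. Read naively, the symbol estimate puts three derivatives on one factor $\widehat{v_\sigma}$ and one more on each of the other two (from the $|\xi-\zeta||\zeta|$ produced when $\sigma^2$ is extracted), which overshoots the $H^2$ budget — this is exactly why the $\partial_x^3(\eta^2)$ nonlinearity is delicate. The rescue is the derivative redistribution above, which is available precisely because the total order $3+2=5$ stays strictly below $2\cdot 3=6$ and the zero-sum condition forces two of the three frequencies to be comparable, so the excess derivative on the top-frequency slot can always be traded against a slot carrying a comparable frequency. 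It is also here that the hyperbolic weight is decisive: one obtains the full power $\sigma^2$ — in place of the $\sigma^1$ of \cite{BTT21} — because $\tanh(\sigma t)\le\sigma t$, i.e.\ because \eqref{hypw-est} is quadratic, rather than linear, in $\sigma$.
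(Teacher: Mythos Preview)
Your argument is correct and follows the same overall strategy as the paper --- pass to the Fourier side, extract a $\sigma^2$ from the commutator symbols, then close a multilinear estimate in $H^2(\R)$ --- but the execution differs in two places worth recording. First, for the bilinear symbol you obtain the exact identity $|m(\xi,\zeta)|=\tanh(\sigma|\xi-\zeta|)\tanh(\sigma|\zeta|)$ directly from the addition law for $\cosh$; the paper instead invokes its Lemma~\ref{lm-coshest} (taken from \cite{DMT2022}), which gives the coarser bound $|m|\le 8\sigma^2|\xi-\zeta||\zeta|$ and whose proof passes through the auxiliary inequality $|\cosh b-\cosh a|\le\tfrac12|b^2-a^2|(\cosh a+\cosh b)$. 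Your route is shorter and yields a better constant. Second, you treat all four pieces uniformly as trilinear/quadrilinear integrals on $\{\sum\xi_j=0\}$ and redistribute derivatives using the comparability of the two largest frequencies; the paper instead applies Cauchy--Schwarz at the outset to isolate each $N_j$ in a fixed norm ($\|\partial_x N_1\|_{L^2}$, $\|N_2\|_{H^{-1}}$ via the dual embedding $L^1\hookrightarrow H^{-1}$, and $\|N_3\|_{L^2}$). Your uniform treatment avoids the $H^{-1}$ detour for the $N_2$ term and makes clearer why the delicate $\gamma\partial_x^3(\eta^2)$ contribution still fits the $H^2$ budget: the total weight degree $5$ is strictly below $2\cdot 3$, and the zero-sum constraint supplies exactly the one derivative transfer needed. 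The only place your sketch is thin is the cubic symbol bound $|m_3|\lesssim\sigma^2\sum_j|\zeta_j|^2$; filling it in amounts to the same addition-law expansion you used for $m$, and gives $|m_3|\le\sum_{j<k}\tanh(\sigma|\zeta_j|)\tanh(\sigma|\zeta_k|)$, which is in fact the content of Lemma~\ref{lm-coshest} for $p=3$.
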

Indeed, applying
 \eqref{KeyEst} to \eqref{m-energy} we obtain
\begin{equation}\label{apriorienergy}
\sup_{0\leq t \leq T}E_{\sigma}(t)=E_{\sigma}(0)+\sigma^2 T. \mathcal{O} \left([1+\norm{v_\sigma}_{L^{\infty}_{T}H^2}]\norm{v_\sigma}^3_{L^{\infty}_{T}H^2}\right)
\end{equation}
where $L^{\infty}_{T}H^2:=L^{\infty}_tH^2([0,T]\times \R)$ with $T$ is as in Theorem \ref{thm-lwp}.

As a consequence of Theorem \ref{thm-lwp} we have the bound 
 \begin{equation}\label{solbd}
	\norm{v_\sigma}_{L^{\infty}_{T}H^2(\R)}=\norm{\eta}_{L^{\infty}_{T}H^{\sigma,2}(\R)}\leq c\norm{\eta_0}_{H^{\sigma,2}(\R)}=c\norm{v_\sigma(\cdot, 0)}_{H^2(\R)}.
	\end{equation} 
		On the other hand, 
 \begin{equation}\label{m-engy0}
 \begin{split}
 E_{\sigma}(0)&=\frac{1}{2}\int_{\R}\left[v_\sigma(x,0)\right]^2+\gamma_1 \left[\partial_xv_\sigma(x,0)\right]^2+\delta_1\left[\partial_x^2v_\sigma(x,0)\right]^2 dx\\& \sim \norm{v_\sigma(.,0)}^2_{H^2(\R)}.
 \end{split}
 \end{equation}
From \eqref{solbd} and \eqref{m-engy0} we get
 $$\norm{v_\sigma}_{L^{\infty}_{T}H^2(\R)}\sim  \left(E_{\sigma}(0)\right)^\frac12,$$ which can combined
 with \eqref{apriorienergy} to obtain the desired estimate \eqref{almostconv-est}.

\section{Proof of Theorem \ref{thm-main}}
Suppose that $\eta(\cdot,0)=\eta_0\in H^{\sigma_0,2}(\R)$ for some $\sigma_0>0$.
This implies 
$
v_{\sigma_0} (\cdot, 0)=\cosh (\sigma_0 |D)|) \eta_0 \in H^2,
$ and hence 
\[E_{\sigma_0}(0)\sim \norm{v_{\sigma_0}(.,0)}^2_{H^2(\R)} <\infty.\]

Now following the argument in \cite{ST2015} (see also \cite{SD2015})
we can construct a solution on $[0, T_\ast]$ for arbitrarily large time $T_\ast$. This is achieved by applying the approximate conservation \eqref{almostconv-est}, so as to repeat the local result in Theorem \ref{thm-almostconv} on successive short time intervals of size $T $ to reach $T_\ast$, by adjusting the strip width parameter $ \sigma \in (0, \sigma_0]$ of the solution according to the size of $T_\ast$. 

In what follows we prove that
\begin{equation}\label{Ebound}
\sup_{0\leq t \leq T_\ast }E_{\sigma}(t)\leq 2 E_{\sigma_0}(0) \quad \text{for} \quad \sigma\ge C /\sqrt{T_\ast}
\end{equation} 
for arbitrarily large $T_\ast$ and  $C>0$ depending on $E_{\sigma_0}(0)$.
This would in turn imply 
\[ \sup_{0\leq t \leq T_\ast } \norm{\eta(t)}_{H^{\sigma, 2}(\R)} < \infty  \quad \text{for} \quad \sigma \ge C/ \sqrt{T_\ast}\] 
which proves Theorem \ref{thm-main}.

It remains to prove \eqref{Ebound}. To do this, first observe that for $\sigma \in (0, \sigma_0]$
and $\tau \in (0, T]$, we have by
Theorems \ref{thm-lwp} and 
\ref{thm-almostconv},
\begin{align*}
\sup_{0\leq t \leq \tau }E_{\sigma}(t)&\leq E_{\sigma}(0)+c\sigma^2T \left[1+(E_{\sigma}(0))^{1/2} \right](E_{\sigma}(0))^{3/2}
\\
& \leq E_{\sigma_0}(0)+c\sigma^2T \left[1+(E_{\sigma_0}(0))^{1/2} \right](E_{\sigma_0}(0))^{3/2}.
\end{align*} 
To get the second line we used the fact the $ \mathcal E_{\sigma}(0) \le \mathcal E_{\sigma_0}(0)$ which holds for $\sigma \le \sigma_0$ as $\cosh r$ is increasing for $r\ge 0$.
Thus, 
\begin{equation}\label{Ebound1}
 \sup_{0\leq t \leq \tau}E_{\sigma}(t)\leq 2E_{\sigma_0}(0)
 \end{equation}
provided that 
\begin{equation}\label{sigma-cond1}
c\sigma^2T \left[1+(E_{\sigma_0}(0))^{1/2} \right](E_{\sigma_0}(0))^{3/2} \leq E_{\sigma_0}(0).
\end{equation}

Next, we apply Theorem \ref{thm-lwp} with initial time $t=\tau$ and time-step size $T$ as in \eqref{T} to extend the solution from $[0, \tau]$ to $[\tau,\tau+T]$. By Theorem \ref{thm-almostconv} and \eqref{Ebound1} we obtain
\begin{equation}
\sup_{\tau\leq t \leq \tau+ T}E_{\sigma}(t)\leq E_{\sigma}(\tau)+c\sigma^2T\left[ 1+(2E_{\sigma_0}(0))^{1/2} \right]\left[(2E_{\sigma_0}(0))^{3/2}\right].
\end{equation}
In this way we cover all time intervals $[0,T], [T,2T]$, etc., and obtain
\begin{align*}
E_{\sigma}(T)&\leq E_{\sigma}(0)+c\sigma^2T \left[1+(2E_{\sigma_0}(0))^{1/2} \right](2E_{\sigma_0}(0))^{3/2}
\\ 
E_{\sigma}(2T)&\leq E_{\sigma}(T)+c\sigma^2T[1+(2E_{\sigma_0}(0))^{1/2}](2E_{\sigma_0}(0))^{3/2}
\\
& \leq 
E_{\sigma}(0)+ 2c\sigma^2T[1+(2E_{\sigma_0}(0))^{1/2}](2E_{\sigma_0}(0))^{3/2}
\\
& \vdots \\ E_{\sigma}(nT)&\leq E_{\sigma}(0)+nc\sigma^2T[1+(2E_{\sigma_0}(0))^{1/2}](2E_{\sigma_0}(0))^{3/2}.
\end{align*}
This argument can be continued as long as
\begin{equation}
nc\sigma^2T \left[1+(2E_{\sigma_0}(0))^{1/2} \right](2E_{\sigma_0}(0))^{3/2}\leq E_{\sigma_0}(0)
\end{equation}
as this would imply $E_{\sigma}(n T)\leq 2E_{\sigma_0}(0)$.

 Thus, the induction stops at the first integer $n$ for which 
 \[nc\sigma^2T \left[1+(2E_{\sigma_0}(0))^{1/2} \right](2E_{\sigma_0}(0))^{3/2}> E_{\sigma_0}(0)\] and then we have reached the finite time $T_\ast=nT$ when
  \[c\sigma^2T_\ast \left[1+(2E_{\sigma_0}(0))^{1/2} \right](2E_{\sigma_0}(0))^{1/2}> 1.\]
 This proves $\sigma \geq  C/ \sqrt{T_\ast}$ for some $C>0$ depending on $E_{\sigma_0}(0)$.

\section{Proof of Lemma \ref{lm-keylm} }

To prove \eqref{KeyEst} we need the following estimate from \cite[Lemma 3]{DMT2022}
in the special cases of $p=2$ and $p=3$. 
\begin{lemma} 
\label{lm-coshest}
Let $\xi=\sum_{j=1}^p \xi_j$ for $\xi_j \in \R$, where $p \ge 1$ is an integer. 
Then 
\begin{equation}\label{coshpest}
\left|1 -  \cosh |\xi| \prod_{j=1}^p  \sech |\xi_j|   \right| \le 2^p \sum_{  j\neq k =1}^p   |\xi_j| |\xi_k|.
\end{equation}
\end{lemma}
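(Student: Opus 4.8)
The plan is to put $\cosh|\xi|\,\prod_{j=1}^p\sech|\xi_j|$ in closed form via the hyperbolic addition formula and then bound the remainder term by term; the constant $2^p$ is so generous that no optimization is needed. Since $\cosh$ and $\sech$ are even, we may drop the moduli inside and work with the signed variables: $\cosh|\xi|=\cosh\bigl(\sum_{j=1}^p\xi_j\bigr)$ and $\sech|\xi_j|=\sech\xi_j$. Writing $e^{\pm\sum_j\xi_j}=\prod_{j=1}^p(\cosh\xi_j\pm\sinh\xi_j)$, expanding both products and adding, one gets the $p$-fold addition identity
\begin{equation*}
\cosh\Bigl(\sum_{j=1}^p\xi_j\Bigr)=\sum_{\substack{S\subseteq\{1,\dots,p\}\\ \#S\ \mathrm{even}}}\ \prod_{j\in S}\sinh\xi_j\ \prod_{j\notin S}\cosh\xi_j .
\end{equation*}
Dividing by $\prod_{j=1}^p\cosh\xi_j$ and splitting off the $S=\emptyset$ term, which equals $1$,
\begin{equation*}
1-\cosh|\xi|\prod_{j=1}^p\sech|\xi_j|=-\sum_{\substack{S\subseteq\{1,\dots,p\}\\ \#S\ \mathrm{even},\ \#S\geq 2}}\ \prod_{j\in S}\tanh\xi_j .
\end{equation*}

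From here the estimate is bookkeeping. By the triangle inequality and the elementary bounds $|\tanh t|\le 1$ and $|\tanh t|\le|t|$ (the latter from $\tanh'=\sech^2\le 1$), for each subset $S$ occurring in the sum I would fix two distinct indices $a,b\in S$ (possible because $\#S\ge 2$), bound those two tangents by $|\xi_a|,|\xi_b|$ and every remaining tangent by $1$, and thus obtain
\begin{equation*}
\Bigl|\prod_{j\in S}\tanh\xi_j\Bigr|\le|\xi_a|\,|\xi_b|\le\sum_{j\neq k=1}^p|\xi_j|\,|\xi_k| .
\end{equation*}
Since the even-cardinality subsets of $\{1,\dots,p\}$ number $2^{p-1}$, those with $\#S\ge 2$ number at most $2^{p-1}-1\le 2^p$; summing the last display over them yields \eqref{coshpest}.

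I do not anticipate a genuine obstacle here. The only slightly delicate points are getting the $p$-fold $\cosh$-expansion right (one may instead verify it by induction on $p$) and choosing the pair $(a,b)$ inside each $S$ so that the bound lands on a true cross term $|\xi_a|\,|\xi_b|$ with $a\neq b$ — which is precisely why the constant $1$ must be removed first. The constant $2^p$ is far from sharp, the computation above actually delivering $2^{p-1}-1$, and the cases $p=2,3$ needed in Lemma~\ref{lm-keylm} are immediate special instances.
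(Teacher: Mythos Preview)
Your proof is correct and takes a genuinely different route from the paper's. The paper only spells out the case $p=2$ (citing \cite{DMT2022} for general $p$): it uses the product-to-sum identity $\cosh|\xi_1|\cosh|\xi_2|=\tfrac12\bigl[\cosh(|\xi_1|-|\xi_2|)+\cosh(|\xi_1|+|\xi_2|)\bigr]$ together with the difference bound $|\cosh b-\cosh a|\le\tfrac12|b^2-a^2|(\cosh a+\cosh b)$, and lands on the constant $8$ exactly. You instead expand $\cosh\bigl(\sum_j\xi_j\bigr)/\prod_j\cosh\xi_j$ as a sum of $\tanh$-monomials indexed by even subsets, then bound two factors by $|\tanh t|\le|t|$ and the remaining ones by $|\tanh t|\le1$. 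This handles all $p$ in one stroke, is more elementary, and in fact delivers the sharper constant $2^{p-1}-1$; for $p=2$ it even gives the exact identity $1-\cosh|\xi|\,\sech|\xi_1|\,\sech|\xi_2|=-\tanh\xi_1\tanh\xi_2$ and hence the bound $|\xi_1||\xi_2|$, a factor of eight better than the paper's estimate. The paper's approach would presumably extend to general $p$ by induction, but that is not carried out there.
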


\begin{proof}
For the readers convenience we include the proof in the case $p=2$.
Note that
\begin{equation}\label{cosh-prod}
  \cosh |\xi_1| \cosh |\xi_2| = \frac12  \left[  \cosh(|\xi_1|-|\xi_2|) +  \cosh(|\xi_1|+ |\xi_2|) \right].
\end{equation}
On the other hand,  we have (see  \cite[Lemma 2]{DMT2022}),
\begin{equation}\label{coshpest0}
\left| \cosh b - \cosh a   \right| \le \frac12 \Bigabs{ b^2-a^2} \left(\cosh b+  \cosh a\right).
\end{equation}
for $a, b \in \R$.

\vspace{2mm}

Then by \eqref{cosh-prod} and \eqref{coshpest0},
\begin{equation*}\label{cosh-prodest1}
\begin{split}
\left|   \cosh |\xi_1| \cosh |\xi_2|-\cosh |\xi|  \right| 
 & =    \left|  \frac12 \left(\sum_{\pm} \cosh \left( |\xi_1|\pm  |\xi_2|\right)-\cosh |\xi|  \right)\right|
 \\
 &\le \frac12 \sum_{\pm} \frac12 \left|  \left( |\xi_1|\pm  |\xi_2|\right)^2-  |\xi|^2 \right| \left( \cosh \left( |\xi_1| \pm  |\xi_2|\right) +  \cosh |\xi|  \right)
 \\
 &\le \frac12 \cdot 4 |\xi_1| |\xi_2|  \cdot   4 \cosh(|\xi_1|) \cosh(|\xi_2|) 
\\
 &=   8  |\xi_1| |\xi_2| \cosh(|\xi_1|) \cosh(|\xi_2|) .
\end{split}
\end{equation*}
Dividing by $ \cosh(|\xi_1|) \cosh(|\xi_2|) $
 yields the desired estimate \eqref{coshpest} in the  case $p=2$.

\end{proof}

Next we prove \eqref{KeyEst}. For $N(v_\sigma)$ as in \eqref{N}--\eqref{Nj}, we use  Plancherel theorem to write
	 	\begin{align*}
	 	\int_{\R}&v_\sigma N(v_\sigma)dx=\int_{\R}v_\sigma \left(\frac{3}{4}+\gamma\partial_x^2\right)\cdot \partial_xN_1(v_\sigma)-\gamma v\partial_xN_2(v_\sigma)-\frac{1}{8}v\partial_x N_3(v_\sigma)dx\\&= \underbrace{\int_{\R}\left(\frac{3}{4}+\gamma\partial_x^2\right)v_\sigma\cdot \partial_xN_1(v_\sigma)dx}+\underbrace{\gamma\int_{\R}\partial_xv_\sigma \cdot N_2(v_\sigma)dx}+ \underbrace{\frac{1}{8}\int_{\R}\partial_xv_\sigma \cdot N_3(v_\sigma) dx}.\\&\quad\quad\quad\quad\qquad :=I_1 \qquad\qquad\qquad\qquad\qquad:= I_2 \qquad\qquad \qquad \qquad :=I_3
	 	\end{align*} 
So \eqref{KeyEst} follows from
\begin{align}
\label{Est-Ij}
|I_j| &\lesssim \sigma^2\norm{v_\sigma}^3_{H^2(\R)}, \qquad (j=1,2)
\\
\label{Est-I3}
|I_3|&\lesssim \sigma^2\norm{v_\sigma}^4_{H^2(\R)}.
\end{align}
\subsection{ Proof of \eqref{Est-Ij} when $j=1$}
By Cauchy-Schwarz inequality,
	 	\begin{align*}
	 	|I_1|&\leq \norm{ \left(\frac{3}{4}+\gamma\partial_x^2 \right)v_\sigma}_{L^2_x(\R)}\norm{\partial_x N_1(v_\sigma)}_{L^2_x(\R)}\\& \lesssim \norm{v_\sigma}_{H^2(\R)}\norm{\partial_x N_1(v_\sigma)}_{L^2_x(\R)}.
	 	\end{align*}
	 	So the proof reduces to
	 	 \begin{equation}
	 	 \label{Est-I1R}
	 	 \norm{\partial_x N_1(v_\sigma)}_{L^2_x(\R)}\lesssim \sigma^2 \norm{v_\sigma}^2_{H^2(\R)},
	 	 	 	 \end{equation}
	 	 	 	 where
	 	 	 	 $$
	 	 	 	 N_1(v_{\sigma})=v_{\sigma}^2-\cosh(\sigma|D|) \left[ \sech(\sigma|D|)v_\sigma \right]^2.
	 	 	 	 $$
Now taking the Fourier Transform of $\partial_xN_1(v_\sigma)$ and applying \eqref{coshpest} with $p=2$, we obtain
	 	\begin{align*}
	 	\Big|\mathcal F &\left[\partial_xN_1(v_\sigma)\right] (\xi)\Big|
	 	\\
	 	&
	 	=\Bigg|\int_{\xi=\xi_1+\xi_2} i\xi 
	 	\left( 1-\cosh(\sigma|\xi|)\prod_{j=1}^{2}\sech(\sigma|\xi_j|) \right)  \widehat{v_\sigma}(\xi_1)\widehat{v_\sigma}(\xi_2) d\xi_1d\xi_2 \Bigg| \\
	 	&\leq 4\sigma^2 \int_{\xi=\xi_1+\xi_2} |\xi| \left( \sum_{  j\neq k =1}^2  |\xi_j| |\xi_k| \right)| |\widehat{v_\sigma}(\xi_1)||\widehat{v_\sigma}(\xi_2)|d\xi_1d\xi_2.
	 	\end{align*} 
	 	
By symmetry, we may assume	$|\xi_1|\leq |\xi_2|$. Then
	 	\begin{align*}
	 	\Big|\mathcal F \left[\partial_xN_1(v_\sigma)\right] (\xi)\Big|&\leq 16\sigma^2\int_{\xi=\xi_1+\xi_2}|\xi_1||\widehat{v_\sigma}(\xi_1)| \cdot |\xi_2|^2  |\widehat{v_\sigma}(\xi_2)| d\xi_1d\xi_2\\&=16\sigma^2\mathcal{F}_x[|D|w_\sigma.|D|^2w_\sigma](\xi),
	 	\end{align*}
	 	where $w_\sigma=\mathcal{F}^{-1}_x(|\widehat{v_\sigma}|)$.
	 	Finally, by  Plancherel, H\"older inequality and  Sobolev embedding,
	 	\begin{align*}
	 	\norm{\partial_xN_1(v_\sigma)}_{L^2_x(\R)}&\le 16\sigma^2\norm{ |D|w_\sigma.|D|^2 w_\sigma  }_{L^2_x(\R)}
	 	\\&\lesssim\sigma^2 \norm{|D|w_\sigma}_{L^{\infty}_x(\R)} \norm{|D|^2w_\sigma}_{L^2_x(\R)}\\& \lesssim \sigma^2\norm{w_\sigma}^2_{H^2(\R)}\lesssim \sigma^2\norm{v_\sigma}^2_{H^2(\R)}
	 	\end{align*} 
	which proves \eqref{Est-I1R}.

\subsection{ Proof of \eqref{Est-Ij} when $j=2$}
By Plancherel and Cauchy-Schwarz inequality,
	 \begin{align*}
	 |I_2|=\bigg|\int_{\R}\partial_xv_\sigma.N_2(v_\sigma)dx\bigg| &= \bigg| \int_{\R}\langle D \rangle \partial_xv_\sigma. \langle D \rangle^{-1}N_2(v_\sigma)dx  \bigg|
	 \\
	 & \leq \norm{\langle D \rangle \partial_xv_\sigma}_{L^2_x(\R)} \norm{\langle D \rangle^{-1}N_2(v_\sigma)}_{L^2_x(\R)} \\& \lesssim \norm{v_\sigma}_{H^2(\R)}\norm{N_2(v_\sigma)}_{H^{-1}(\R)}.
	 \end{align*}

So the proof reduces to
	 	\begin{equation}
	 	 \label{Est-I2R}
	 	\norm{ N_2(v_\sigma)}_{H^{-1}(\R)}\lesssim \sigma^2 \norm{v_\sigma}^2_{H^2(\R)},
	 	\end{equation}
	 	where
	 	 	 	 $$
	 	 	 	 N_2(v_{\sigma})=(\partial_xv_\sigma)^2-\cosh(\sigma|D|) \left[ \sech(\sigma|D|) \partial_ xv_\sigma \right]^2.
	 	 	 	 $$
	 	Taking the spatial Fourier Transform of $N_2(v_\sigma)$ and using \eqref{coshpest} with $p=2$, we obtain
	 	\begin{align*}
	 	\Big|\mathcal F &\left[ N_2(v_\sigma)\right] (\xi)\Big|&
	 \\
	 &	=\Bigg|\int_{\xi=\xi_1+\xi_2}  
	 	\left( 1-\cosh(\sigma|\xi|)\prod_{j=1}^{2}\sech(\sigma|\xi_j|) \right)  i \xi_1\widehat{v_\sigma}(\xi_1)  i \xi_2 \widehat{v_\sigma}(\xi_2) d\xi_1d\xi_2 \Bigg| \\
	 	&\leq 8\sigma^2 \int_{\xi=\xi_1+\xi_2}   |\xi_1|^2 |\xi_2|^2 |\widehat{v_\sigma}(\xi_1)||\widehat{v_\sigma}(\xi_2)|d\xi_1d\xi_2
	 	\\
	 	&\leq 8\sigma^2\mathcal{F}_x\left[    |D|^2 w_\sigma.|D|^2 w_\sigma\right] (\xi),
	 	\end{align*} 
	 	where $w_\sigma=\mathcal{F}^{-1}_x(|\widehat{v_\sigma}|)$.
	 	
	 Then by  Plancherel, the Sobolev embedding
	 	$$H^{1}_x(\R)  \hookrightarrow L^{\infty}_x(\R)  \quad \Leftrightarrow  \quad L^{1}_x(\R) \hookrightarrow H^{-1}_x(\R)$$ and  Cauchy-Schwarz, we obtain
	 	\begin{align*}
	 	\norm{    N_2(v_\sigma) }_{H^{-1}_x(\R)}&\le  8\sigma^2 \norm{ |D|^2 w_\sigma.|D|^2w_\sigma  }_{H^{-1}_x(\R)}\\
	&\lesssim\sigma^2 \norm{  |D|^2 w_\sigma.|D|^2w_\sigma }_{L^1_x(\R)}
	\\& \lesssim \sigma^2\norm{ |D|^2 w_\sigma}_{L_x^2(\R)} \norm{ |D|^2w_\sigma}_{L_x^2(\R)}\\&\lesssim \sigma^2\norm{v_\sigma}^2_{H^2_x(\R)}.
	 	\end{align*} 
	 		which proves \eqref{Est-I2R}.

\subsection{ Proof of \eqref{Est-I3}}
By Cauchy-Schwarz inequality,
	 	\begin{align*}
	 	|I_3|=\frac{1}{8}\bigg|\int_{\R}\partial_xv_\sigma N_3(v_\sigma) dx\bigg| & \lesssim \norm{\partial_xv_\sigma}_{L^2_x(\R)}\norm{N_3(v_\sigma)}_{L^2_x(\R)}\\& \lesssim \norm{v_\sigma}_{H^1(\R)}\norm{N_3(v_\sigma)}_{L^2_x(\R)}.
	 	\end{align*}
	 	So it remains to prove
	 	\begin{equation}
	 	 \label{Est-I3R}
	 	\norm{ N_3(v_\sigma)}_{L^2_x(\R)}\lesssim \sigma^2 \norm{v_\sigma}^3_{H^2(\R)},
	 	\end{equation}
	 	where
	 	 	 	 $$
	 	 	 	N_3(v_{\sigma})=v_{\sigma}^3-\cosh(\sigma|D|) \left[ \sech(\sigma|D|) v_\sigma \right]^3.
	 	 	 	 $$
	 	Taking the Fourier Transform of $N_3(v_\sigma)$ and applying \eqref{coshpest} with $p=3$, we obtain
	 	\begin{align*}
	 	\Big|\mathcal F_x &\left[  N_3(v_\sigma)\right] (\xi)\Big|&
	 \\
	 &	=\Bigg|\int_{\xi=\xi_1+\xi_2 + \xi_3}  
	 	\left( 1-\cosh(\sigma|\xi|)\prod_{j=1}^{3}\sech(\sigma|\xi_j|) \right) \widehat{v_\sigma}(\xi_1) \widehat{v_\sigma}(\xi_2) \widehat{v_\sigma}(\xi_3)  d\xi_1d\xi_2 d\xi_3\Bigg| \\
	 	&\leq 8\sigma^2 \int_{\xi=\xi_1+\xi_2+ \xi_3}  
	 	\left( \sum_{  j\neq k =1}^3   |\xi_j| |\xi_k| \right)|\widehat{v_\sigma}(\xi_1)||\widehat{v_\sigma}(\xi_2)| |\widehat{v_\sigma}(\xi_3)|d\xi_1d\xi_2 d\xi_3
	 	\end{align*} 
	 	By symmetry, we may assume $|\xi_1|\leq |\xi_2|\leq |\xi_3|$, which implies 
	 	\begin{align*}
	 	\Big|\mathcal F_x \left[  N_3(v_\sigma)\right] (\xi)\Big| &\leq 48\sigma^2\int_{\xi=\xi_1+\xi_2+\xi_3} |\widehat{v_\sigma}(\xi_1)||\widehat{v_\sigma}(\xi_2)||\xi_3|^2|\widehat{v_\sigma}(\xi_3)| d\xi_1d\xi_2d\xi_3
	 	\\
	 	&=48 \sigma^2\mathcal{F}_x(w_\sigma.w_\sigma.|D|^2 w_\sigma)(\xi),
	 	\end{align*}
	 	where $w_\sigma=\mathcal{F}^{-1}_x(|\widehat{v_\sigma}|)$.
	 	
	Then by Plancherel and H\"older inequality we get
	 \begin{align*}
	 \norm{\mathcal F_x \left[  N_3(v_\sigma)\right] (\xi)}_{L^2_x(\R)}\lesssim \sigma^2\norm{w_\sigma^2.|D|^2w_\sigma}_{L^2_x(\R)} &\lesssim \sigma^2 \norm{w_\sigma}^2_{L^{\infty}_x(\R)}\norm{|D|^2w_\sigma}_{L^2_x(\R)}\\& \lesssim \sigma^2 \norm{w_\sigma}^2_{H^2(\R)} \norm{w_\sigma}_{H^2(\R)}\\&\lesssim \sigma^2 \norm{w_\sigma}^3_{H^2(\R)}\\& \lesssim\sigma^2 \norm{v_\sigma}^3_{H^2(\R)}
	 \end{align*} 
 	 		which proves \eqref{Est-I3R}.

\vspace{10mm}

\noindent \textbf{Acknowledgments}
A. Tesfahun acknowledges support from the Social Policy Research Grant (SPG), Nazarbayev University.


\begin{thebibliography}{10}


\bibitem{BTT21}
B. Belayneh, E. Tegegn and A. Tesfahun, 
\emph{
Lower bound on the radius of analyticity of solution for fifth order KdV-BBM Equation.} Nonlinear Differ. Equ. Appl. NoDEA, 29 (6)
(2022).


  \bibitem{BCPS2018}
J. L. Bona, X. Carvajal, M. Panthee, M. Scialom \emph{ Higher-Order Hamiltonian Model for Unidirectional Water Waves},  J. Nonlinear Sci. 28 (2018) 543--577.

\bibitem{BCS2002}
J. L. Bona, M. Chen and J.-C. Saut, \emph{ Boussinesq equations and other systems for small- amplitude long waves in nonlinear dispersive media I. Derivation and linear theory}, J. Nonlinear Sci. 12 (2002) 283--318.

\bibitem{BCS2004}
J. L. Bona, M. Chen and J.-C. Saut; \emph{Boussinesq equations and other systems for small- amplitude long waves in nonlinear dispersive media II. The nonlinear theory}, Nonlinearity 17 (2004) 925--952.



\bibitem{BGK2005}
J.~L. Bona, Z.~Gruji{\'c}, and H.~Kalisch, 
\emph{Algebraic lower bounds for the
  uniform radius of spatial analyticity for the generalized {K}d{V} equation},
  Ann. Inst. H. Poincar\'e Anal. Non Lin\'eaire \textbf{22} (2005), no.~6,
  783--797. 
  
  \bibitem{BGK2006}
  J.~L. Bona, Z.~Gruji{\'c}, and H.~Kalisch, \emph{Global solutions of the derivative Schrödinger equation in a class of functions analytic in a strip}, 
  J. Differential Equations 229 (2006) 186--203.
  
  
  

\bibitem{B1993}
J. Bourgain, \emph{On the Cauchy problem for the Kadomtsev Petviashvili  equation}, Geom. Funct. Anal. 3 (4) (1993) 315-341.


  
  \bibitem{CP2020}
 X. Carvajal, M. Panthee \emph{ On propagation of regularities and evolution of radius of analyticity in the solution of the fifth-order KdV–BBM model.}  Zeitschrift für ang. Math. und Physik, (\textbf{73}),  68 (2022).

 \bibitem{CP2019}
X. Carvajal, M. Panthee, \emph{ On sharp global well-posedness and Ill-posedness for a fifth-order KdV-BBM type equation}, J. Math. Anal. Appl. 479 (2019) 688--702.

  
  
\bibitem{DMT2022} T.T. Dufera, S. Mebrate,   and A. Tesfahun, \emph{On the persistence of spatial analyticity for the Beam equation}, J. Math. Anal. and Appl., 126001 (2022)

\bibitem{Ferrari1998}
A.~B. Ferrari and E.~S. Titi, \emph{Gevrey regularity for nonlinear analytic
  parabolic equations}, Comm. Partial Differential Equations \textbf{23}
  (1998), no.~1-2, 1--16.
  
  
\bibitem{FT1989}
C. Foias, R. Temam, \emph{ Gevrey class regularity for the solutions of the Navier-Stokes equations},  J. Funct. Anal. 87 (1989) 359--369.
  

\bibitem{GGYTE2015}
P.~G\'erard, Y.~Guo, and E.~S. Titi, \emph{On the radius of analyticity of
  solutions to the cubic {S}zeg{\H{o}} equation}, Ann. Inst. H. Poincar\'e
  Anal. Non Lin\'eaire \textbf{32} (2015), no.~1, 97--108.



\bibitem{HHP2011}
H.~Hannah, A.~A. Himonas, and G.~Petronilho, \emph{Gevrey regularity of the
  periodic g{K}d{V} equation}, J. Differential Equations \textbf{250} (2011),
  no.~5, 2581--2600. 


\bibitem{HKS2017}
A.~A. Himonas, K.~Henrik, and Selberg S., \emph{On persistence of spatial
  analyticity for the dispersion-generalized periodic kdv equation}, Nonlinear
  Analysis: Real World Applications \textbf{38} (2017), 35--48.

\bibitem{HP2012}
A.~A. Himonas and G.~Petronilho, \emph{Analytic well-posedness of periodic
  g{K}d{V}}, J. Differential Equations \textbf{253} (2012), no.~11, 3101--3112.
  
   
  
  
  \bibitem{KM1986}
T. Kato T, K. Masuda, \emph{Nonlinear evolution equations and analyticity I}, Ann. Inst. H. Poincare Anal. Non Lineaire, 3 (1986) 455--467.

\bibitem{K1976}
Y.~Katznelson, \emph{An introduction to harmonic analysis}, corrected ed.,
  Dover Publications, Inc., New York, 1976.


\bibitem{Levermore1997}
C.~D. Levermore and M.~Oliver, \emph{Analyticity of solutions for a generalized
  {E}uler equation}, J. Differential Equations \textbf{133} (1997), no.~2,
  321--339. 


\bibitem{Oliver2001}
M.~Oliver and E.~S. Titi, \emph{On the domain of analyticity of solutions of
  second order analytic nonlinear differential equations}, J. Differential
  Equations \textbf{174} (2001), no.~1, 55--74. 

\bibitem{Panizzi2012}
S.~Panizzi, \emph{On the domain of analyticity of solutions to semilinear
  {K}lein-{G}ordon equations}, Nonlinear Anal. \textbf{75} (2012), no.~5,
  2841--2850.
  
   

\bibitem{SD2015}
S.~Selberg and D.~O. da~Silva, \emph{Lower bounds on the radius of spatial
  analyticity for the {K}d{V} equation}, Ann. Henri Poincar\'e (2016).
  doi:10.1007/s00023-016-0498-1.

\bibitem{ST2017}
S.~Selberg and A.~Tesfahun, \emph{On the radius of spatial analyticity for the
  quartic generalized {K}d{V} equation}, Annales Henri Poincaré 18,  3553--3564 (2017)

\bibitem{ST2015}
\bysame, \emph{On the radius of spatial analyticity for the 1d
  {D}irac-{K}lein-{G}ordon equations},  Journal of Differential Equations
  \textbf{259} (2015), 4732--4744.


\bibitem{AT2017}
A.~Tesfahun, \emph{On the radius of spatial analyticity for cubic nonlinear
  {S}chrodinger equation},  J. Differential Equations 263 (2017) 7496--7512.


\bibitem{AT2019-S}
\bysame, \emph{Remark on the persistence of spatial analyticity for cubic nonlinear Schrödinger equation on the circle.}
Nonlinear Differ. Equ. Appl. NoDEA,  (2019) 26:12






\bibitem{AT2019}
\bysame, \emph{ Asymptotic lower bound for the radius of spatial analyticity to solutions of KdV equation},  Comm. Contemp. Math. 21, 08, 1850061 (2019).


\end{thebibliography}
\end{document}